\DeclareFontFamily{OT1}{rsfs}{}
\DeclareFontShape{OT1}{rsfs}{n}{it}{<-> rsfs10}{}
\DeclareMathAlphabet{\mathscr}{OT1}{rsfs}{n}{it}
\newtheorem{theorem}{Theorem}[section]
\newtheorem{corol}[theorem]{Corollary}
\newtheorem{prop}[theorem]{Proposition}
\newtheorem{question}[theorem]{Question}
\theoremstyle{definition} \newtheorem{defin}[theorem]{Definition}}
\theoremstyle{remark} \newtheorem{remark}[theorem]{Remark}
\newtheorem{example}[theorem]{Example}
\newcommand{\Abb}{{\mathbb{A}}}
\newcommand{\Pbb}{{\mathbb{P}}}
\newcommand{\Zbb}{{\mathbb{Z}}}
\newcommand\restr[2]{{
  \left.\kern-\nulldelimiterspace 
  #1 
  \vphantom{\big|} 
  \right|_{#2} 
  }}
\title[Chern Classes for Free Divisors with Linear Type Jacobian]{Chern Classes of Logarithmic Derivations for Free Divisors with Jacobian Ideal of Linear Type}
\begin{document}

\title[Chern Classes for Free Divisors with Linear Type Jacobian]{Chern Classes of Logarithmic Derivations for Free Divisors with Jacobian Ideal of Linear Type}

\author{Xia Liao}
\address{
KIAS,
85 Hoegiro, Dongdaemun-gu,
Seoul 130-722,
Republic of Korea
}
\email{liao@kias.re.kr}

\subjclass[2010]{Primary 14C17; Secondary 14J17}

\keywords{Chern-Schwartz-MacPherson class, Logarithmic derivation, Jacobian ideal of linear type}

\begin{abstract}
Let $X$ be a nonsingular variety defined over an algebraically closed field of characteristic $0$, and $D$ be a free divisor with Jacobian ideal of linear type. We compute the Chern class of the sheaf of logarithmic derivations along $D$ and compare it with the Chern-Schwartz-MacPherson class of the hypersurface complement. Our result establishes a conjecture by Aluffi raised in \cite{hyparr}.
\end{abstract}

\maketitle

\section{introduction}
Let $X$ be a $n$-dimensional nonsingular variety defined over an algebraically closed field of characteristic $0$, $D$ a reduced effective divisor and $U=X\smallsetminus D$ the hypersurface complement. The present paper is the third one in our sequence of studies on the following question: 
\begin{question}\label{q}
In the Chow group $A_*(X)$, under what conditions is the formula
\begin{equation}\label{formula}
c_{\textup{SM}}(\mathbbm{1}_U)=c(\textup{Der}_X(-\log D))\cap [X]
\end{equation}
true?
\end{question}

The left hand side of the formula is the Chern-Schwartz-MacPherson class of the open subvariety $U$, and the right hand side is the total Chern class of the sheaf of logarithmic derivations along $D$. 

Formula \eqref{formula} is previously known to be true in the following cases:
\begin{itemize}
\item $X$ is a nonsingular algebraic surface and $D$ is a locally quasi-homogeneous divisors \cite{liao1}. 
\item $X$ is a nonsingular variety and $D$ is a certain type of hypersurface arrangement \cite{aluffi}. This in particular includes the cases for free hyperplane arrangements in $\Pbb^n$ and simple normal crossing divisors in nonsingular varieties, which were treated individually in \cite{hyparr} and \cite{MR1697199}.
 \item When $X$ is a nonsingular projective complex variety and $D$ is a locally quasi-homogeneous free divisor, the classes in \eqref{formula} have the same images in the Chow group of the ambient projective space \cite{liao2}.
\end{itemize}

The main result of our investigation in this paper is:

\begin{theorem}\label{main}
Under the condition that $X$ is a nonsingular variety defined over an algebraically closed field $k$ of characteristic 0 and $D$ is a free divisor with Jacobian ideal of linear type, formula \eqref{formula} is true.
\end{theorem}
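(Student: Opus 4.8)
The plan is to turn the sought-after identity \eqref{formula} into a single statement about the Segre class of the singular subscheme $Y \subset X$ defined by the Jacobian ideal $J$ of $D$, and then to use the linear type hypothesis to evaluate that Segre class in closed form. On the left-hand side I would start from the inclusion--exclusion relation $c_{\textup{SM}}(\mathbbm{1}_U) = c(TX)\cap[X] - c_{\textup{SM}}(\mathbbm{1}_D)$ and feed in Aluffi's formula for the Chern--Schwartz--MacPherson class of a hypersurface, which writes $c_{\textup{SM}}(\mathbbm{1}_D)$ in terms of $c(TX)$, the divisor class $D$, and the Segre class $s(Y,X)$ (equivalently, through the Milnor class, which is supported on $Y$). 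This is the shape in which Aluffi phrased the conjecture in \cite{hyparr}.

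For the right-hand side I would use the defining exact sequence of the logarithmic derivations,
\begin{equation*}
0 \longrightarrow \textup{Der}_X(-\log D) \longrightarrow \mathcal{T}_X \longrightarrow \mathcal{O}_D(D) \longrightarrow \mathcal{O}_Y(D) \longrightarrow 0 ,
\end{equation*}
where the middle map sends a vector field $\xi$ to the class of $\xi(f)$ twisted by $\mathcal{O}(D)$, and $Y$ reappears as its degeneracy locus. Freeness of $D$ guarantees that $\textup{Der}_X(-\log D)$ is locally free, so $c(\textup{Der}_X(-\log D))\cap[X]$ is determined by $c(TX)$ together with the Chern/Segre contribution of the quotient sheaves, which is again governed by $Y$. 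Combining the two sides, \eqref{formula} collapses to a single identity comparing $s(Y,X)$ with an explicit class built from the Chern class of $\textup{Der}_X(-\log D)$ and $\mathcal{O}(D)$.

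It remains to compute $s(Y,X)$, and this is where the hypothesis does its work. By definition $s(Y,X)$ is extracted from the blow-up $\mathrm{Bl}_Y X = \textup{Proj}\,\mathcal{R}(J)$ along the Jacobian ideal. Since $J$ is of linear type, the canonical surjection $\textup{Sym}(J)\twoheadrightarrow \mathcal{R}(J)$ is an isomorphism, so $\textup{Proj}\,\mathcal{R}(J)=\textup{Proj}\,\textup{Sym}(J)$; and unlike the Rees algebra, the symmetric algebra is computed directly from a presentation of $J$. Freeness of $D$ supplies such a presentation through the Saito matrix, whose columns are the coefficients of a basis of $\textup{Der}_X(-\log D)$ and whose determinant is a local equation of $D$. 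Consequently $\textup{Proj}\,\textup{Sym}(J)$ is realized as the subscheme of a projective bundle $\Pbb(\mathcal{E})\to X$, with $\mathcal{E}$ read off from the generators of $J$ via the Saito map $\mathcal{T}_X \to \mathcal{O}_D(D)$, cut out by the linear equations coming from the Saito relations. Pushing the tautological classes of $\Pbb(\mathcal{E})$ down to $A_*(X)$ then yields $s(Y,X)$ as an explicit polynomial in the Chern classes of $\textup{Der}_X(-\log D)$ and $\mathcal{O}(D)$, which I would substitute into the reduced identity of the previous step and verify term by term.

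I expect the main obstacle to lie in this last computation: controlling $\textup{Proj}\,\textup{Sym}(J)$ tightly enough to read off the Segre class. Even granted linear type, one has to verify that the component of $\textup{Proj}\,\textup{Sym}(J)$ carrying the Segre class coincides with the projectivized normal cone with no spurious contributions, to match the tautological line bundle correctly with the twist by $\mathcal{O}(D)$ forced by the Saito sequence, and to absorb any excess-intersection correction arising when the Saito relations fail to form a regular sequence. Showing that the linear type hypothesis is precisely what makes these identifications hold --- and hence that the geometrically defined $s(Y,X)$ equals the logarithmic Chern contribution --- is the technical heart of the argument; once it is in place, the remaining manipulation of Chern and Segre classes is routine.
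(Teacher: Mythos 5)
Your plan is built on the same two pillars as the paper's argument---linear type lets you replace the Rees algebra of the Jacobian ideal by its symmetric algebra, and freeness lets you present that symmetric algebra by the Saito matrix, i.e.\ by $\textup{Der}_X(-\log D)$ itself---but you route the comparison through the Segre class $s(Y,X)$ of the singular subscheme and Aluffi's hypersurface formula for $c_{\textup{SM}}(\mathbbm{1}_D)$, whereas the paper works with the quasi-symmetric blow-up of $D$ along $D^s$ and the Lagrangian/shadow formalism (Theorem \ref{shadow} combined with Proposition \ref{quasi}). The paper's packaging has a concrete payoff: the relevant Proj becomes the projectivized kernel cone $P(C)$ of the bundle map $\sigma\colon E\to F$ induced by the sequence \eqref{funda}, sitting inside the projectivized cotangent bundle $P(E)$, so the entire identity reduces to one class computation there and the $s(Y,X)^{\vee}\otimes\mathscr{O}(D)$ bookkeeping of the hypersurface formula never appears. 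Your route is viable in principle, but it is longer, and more importantly it is not carried out.

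The genuine gap is that the step you yourself label the ``technical heart''---controlling $\textup{Proj}\,\textup{Sym}(J)$ tightly enough to extract the Segre class, matching the tautological twist, and ruling out excess-intersection corrections when the Saito relations fail to form a regular sequence---is exactly where the theorem lives, and you only name these difficulties rather than resolve them. The paper's resolution is Proposition \ref{normal} and Corollary \ref{imbed}: a dimension count (the quasi-symmetric blow-up is a Cartier divisor in $\textup{Bl}_{D^s}X$, hence of pure dimension $n-1$, while it is cut out of the $(2n-1)$-dimensional $P(E)$ by the $n$ linear equations coming from a local basis of $\textup{Der}_X(-\log D)$) shows it is a local complete intersection of the expected codimension; its normal bundle is identified as $p^*F\otimes\mathscr{O}(1)$ by comparison with the graph embedding $P(\Gamma(\sigma))\subset P(E\oplus F)$; and a deformation of the graph together with the self-intersection formula gives $[P(C)]=c_n(r^*F\otimes\mathscr{O}(1))\cap[P(E)]$ with no residual terms. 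Without an argument of this kind your plan does not close: ``verify term by term'' presupposes a closed formula for $s(Y,X)$ that you have not derived, and the worry about excess intersection is dissolved only by an l.c.i./codimension argument of this sort, not by the linear type hypothesis alone. A secondary point you would also need to justify is that the Saito syzygies generate the \emph{entire} first syzygy module of the Jacobian ideal, so that they really present $\textup{Sym}(J)$; this does follow from freeness via the sequences \eqref{funda} and \eqref{jacob}, but it is an input to be proved, not a formality.
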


In view of the fact that locally quasi-homogeneous divisors are divisors with Jacobian ideal of linear type \cite{MR1931962}, the result of this paper not only covers all known cases in which formula \eqref{formula} is true, but also answers a conjecture raised by Aluffi in \cite{hyparr}. On the other hand, the fact that our result works for varieties defined over an arbitrary algebraically closed field suggests that our approach is purely algebraic, without resorting to local analytic geometry and the GAGA principle, which were applied in some previous works.

In the course of our study we realized a very interesting connection from question \ref{q} to the Logarithmic Comparison Theorem (LCT). A divisor is said to satisfy LCT if the natural morphism of complexes $\Omega_X^{\bullet}(\log D) \to \Omega_X^{\bullet}(\star D)$ is a quasi-isomorphism.
In \cite{MR1363009}, the authors proved that locally quasi-homogeneous free divisors satisfy LCT. Later, Calder\'{o}n Moreno and Narv\'{a}ez Macarro studied the LCT problem for integrable logarithmic connections with respect to a free divisor with Jacobian ideal of linear type \cite{MR2500865}. More recently, Narv\'{a}ez Macarro was able to show divisors with Jacobian ideal of linear type 
also satisfy LCT \cite{macarro}. In fact, in our previous approach to question \ref{q} \cite{liao2}, LCT was the key to allow us to compare the degrees of the classes in \eqref{formula}. Although in our current approach to question \ref{q} LCT is not utilized, we still wonder if any deeper relations between LCT and formula \eqref{formula} exist. We also note that there are several classes of divisors, such as Kozul free divisors and Euler homogeneous divisors that are highly relevant to LCT \cite{MR1931962}, and we ask if formula \eqref{formula} is still true for those types of divisors.

In the special case that $X = \Pbb^n$, the Chow group is a free abelian group of rank $n+1$ generated by classes of projective subspaces of $\Pbb^n$. Thus the classes in \eqref{formula} can be viewed as polynomials of degree $n$ whose degree $k$ terms are given by the codimension $k$ components of their corresponding classes. Assume moreover that $D$ is a hyperplane arrangement. In this case it is known by the work of Aluffi \cite{hyparr} that the CSM class polynomial is equivalent to the characteristic polynomial of the arrangement up to a change of variables. Combining Aluffi's formula, theorem \ref{main} generalizes a formula given by Musta{\c{t}\u{a}} and Schenck (Theorem 4.1 in \cite{MR1843320}). 

As we begin our main discussion, we will start from reviewing basic properties of Chern-Schwartz-MacPherson classes, especially how these classes arise from taking the shadows of Lagrangian cycles. Then we will see that the Lagrangian cycle corresponding to a free divisor with Jacobian ideal of linear types takes a very simple form, realized by basic operations in intersection theory. The main result will follow by combining these observations.    

\section*{Acknowledgement}
The result presented in this paper is a part of the my doctoral thesis in Florida State University. I am very grateful to my advisor Paolo Aluffi for his patient guidance, his deep mathematical insights. The support and encouragement I received from him can hardly be described by words.

\section{Chern-Schwartz-Macpherson classes as shadows of Lagrangian cycles}

A more detailed account of the materials presented here can be found in \cite{MR2097164} \cite{MR1063344}.

We work over an algebraically closed field $k$ of characteristic $0$. Let $X$ be an algebraic variety defined over $k$, and $W$ be a closed subvariety of $X$. The characteristic function $\mathbbm{1}_W$ of $W$ is the function on $X$ which takes the value $1$ on (closed) points of $W$ and $0$ elsewhere. A constructible function on $X$ is a $\Zbb$-linear combination
\begin{equation*}
\displaystyle \sum_W n_W \cdot \mathbbm{1}_W
\end{equation*}  
where the summation is taken over all closed subvarieties of $X$ and $n_W \in \Zbb$ are nonzero for only finite indexes. We denote by $C(X)$ the group of constructible functions on $X$. 

For a proper morphism $f: X \to Y$ of complex algebraic varieties we can define the push-forward of constructible functions $f_*: C(X) \to C(Y)$ by $\Zbb$-linear extension of the following formula:
\begin{equation*}
f_*(\mathbbm{1}_W)(p) = \chi(f^{-1}(p) \cap W).
\end{equation*}
Here $p$ is an arbitrary (closed) point on $Y$ and $\chi$ is the topological Euler characteristic. For the extension of the definition of push-forward to $k$-varieties, we refer the reader to \cite{MR1063344}.

With the notions introduced above, we get a covariant functor $\mathcal{C}$ from a subcategory of $k$-varieties to the category of abelian groups, assigning $C(X)$ to each variety $X$ and $f_*$ to a proper morphism $f: X \to Y$. 

The Chow functor $\mathcal{A}$ is another important functor from (a subcategory of) $k$-varieties to the category of abelian groups. To each $X \in \textup{Var}(k)$ we assign $A_*(X)$ the Chow group of $X$, which is the group of algebraic cycles on $X$ modulo rational equivalence. With a proper morphism $f: X\to Y$, there is also a well defined push-forword homomorphism $f_*: A(X) \to A(Y)$. For more details, see \cite{MR732620}.

The readers may wonder what relations do the functors $\mathcal{C}$ and $\mathcal{A}$ have. Indeed, Grothendieck conjectured and MacPherson proved that there exists a unique natural transformation (called MacPherson transformation) \cite{MR0361141}:

\begin{equation*}
c_*: \mathcal{C} \leadsto \mathcal{A}
\end{equation*}

\noindent with the normalization property that for a nonsingular variety $X$, the induced homomorphism
\begin{equation*}
C(X) \to A(X)
\end{equation*} 
takes $\mathbbm{1}_X$ to the total Chern class of the tangent bundle of $X$: 
\begin{equation*}
\mathbbm{1}_X \mapsto c(TX) \cap [X]. 
\end{equation*}

Note that the push-forward formula for constructible functions and the normalization property together determine uniquely the homomorphism $C(X) \to A(X)$ for an arbitrary variety $X$, since there is always a resolution of singularity $\pi:\tilde{X} \to X$ such that $\pi$ is a proper morphism. 

Now we want to describe a functor $\mathcal{L}$ which interpolate the functors $\mathcal{C}$ and $\mathcal{A}$. The functor $\mathcal{L}$ assigns to a variety $X$ the group $L(X)$ of {\em{Lagrangian cycles}} over $X$. For the purpose of this paper, we only consider varieties which assumes a closed imbedding to a nonsingular ambient space. Let $i: X\to M$ be such an imbedding and $T^{*}M$ be the cotangent bundle of $M$. Then $L(X)$ is isomorphic to the free abelian subgroup of the group of algebraic cycles in $\Pbb(T^{*}M|_X)$, generated by the cycles of projectivized conormal spaces $[\Pbb(T^{*}_WM)]$ of closed subvarieties $W \subset X$. For $x$ a nonsingular point of $W$, the fiber of $T^{*}_WM$ over $x$ consists of linear forms on $T_{x}M$ which vanish at $T_{x}W$. Thus the restriction of $T^{*}_WM$ to the regular subscheme of $W$ is the genuine conormal bundle, and $T^{*}_WM$ is the closure of this vector bundle in $T^{*}M|_X$. 

A proper morphism $f: X \to Y$ has the ability to push forward the Lagrangian cycles over $X$ to the ones over $Y$. There is a nice description of this mechanism by viewing the Lagrangian cycles as the projectivized conormal spaces. We leave this point to \cite{MR1063344}.

As we proposed, there are two consecutive natural transformations:
\begin{equation*}
\mathcal{C} \leadsto \mathcal{L} \leadsto \mathcal{A}.
\end{equation*}

To understand these natural transformations, fix an algebraic variety $X$. For each closed subvariety $W$, there exists a constructible function $\textup{Eu}_W$ which is called the local Euler obstruction of $W$. It is known that these local Euler obstructions form a basis of abelian group for $C(X)$. The homomorphism $C(X) \to L(X)$ sends the local Euler obstruction of $W$ to the projectivized conormal space of $W$ in $M$, with an appropriate sign twist.
\begin{equation*}
\textup{Eu}_W \to (-1)^{\textup{dim}W}[\Pbb(T^{*}_WM)]
\end{equation*}

The second natural transformation which produces rational equivalent classes in $X$ involves only standard operations in intersection theory. Let $\zeta$ be the universal quotient bundle of rank $m-1$ on $\Pbb(T^{*}M|_X)$ where $m=\textup{dim}M$, and $\pi$ be the projection $\Pbb(T^{*}M|_X) \to X$. Given a Lagrangian cycle $\alpha$, the homomorphism $L(X) \to A_*(X)$ makes the following assignment:
\begin{equation*}
\alpha \mapsto \pi_*(c(\zeta) \cap \alpha).
\end{equation*}

Aluffi calls this particular operation of producing a rational equivalent class in the Chow group of the base scheme from a cycle $\alpha$ in the projective bundle {\em taking the shadow of} $\alpha$ \cite{MR2097164}. Its name is derived for the reason that ``the shadow neglects some of the information carried by the object that casts it". In certain nice cases, the structure theorem of the Chow group of the projective bundle allows us to reconstruct a cycle from its shadow.

The composition of the natural transformations $\mathcal{C} \leadsto \mathcal{L}$ and $\mathcal{L}
\leadsto \mathcal{A}$ is not quite the MacPherson transformation. However it differs from the MacPherson transformation only by a dual. Let $\gamma \in A_*(X)$ be a rational equivalent class, express $\gamma$ as the summation of its $i$-dimensional component $\gamma_i$:
\begin{equation*}
\gamma = \sum \gamma_i.
\end{equation*}
Its dual $\breve{\gamma}$ is defined to be:
\begin{equation*}
\breve{\gamma} = \sum (-1)^i\gamma_i.
\end{equation*}

For instance, let $X$ be a $n$-dimensional variety and $E$ be a rank $n$ vector bundle over $X$. If $\gamma$ is the total Chern class of the vector bundle $E$:
\begin{equation*}
\gamma = c(E) \cap [X],
\end{equation*}
then
\begin{equation*}
\breve{\gamma} = (-1)^n c(E^{\vee}) \cap [X].
\end{equation*} 

Therefore taking the dual is an involution of the Chow group. The MacPherson transformation $c_*$ is the composition of the two natural transformations described above, followed by a dual of the Chow group. More specifically, given $f \in C(X)$, to calculate its CSM class, one needs to find subvarieties $W_i$ of $X$ and integers $n_i$ such that 
\begin{equation*}
f=\sum_i n_i \cdot \textup{Eu}_{W_i}.
\end{equation*}
After doing this, imbed $X$ in a nonsingular ambient variety $M$ and form the Lagragian cycle
\begin{equation*}
\sum_i n_i \cdot (-1)^{\textup{dim}W_i}[\Pbb(T^{*}_{W_i}M)]
\end{equation*}
in the projective bundle $\Pbb(T^{*}M|_X)$. Finally, taking the dual of the shadow of this cycle yields the CSM class of $f$ in $A_*(X)$. In this paper, when the underlying variety is clear from the context, we simply write $c_{\textup{SM}}(f)$ for this class instead of $c_*(X)(f)$.

Given an arbitrary constructible function $f$, the steps to find out its CSM class are conceptual but less practical, because it is in general difficult to determine the subvarieties $W_i$ and the integers $n_i$. Nonetheless, when $X$ is a hypersurface in a nonsingular variety $M$ and $f = \mathbbm{1}_X$ the characteristic function of $X$, we know more or less how to compute the Lagrangian cycle of $f$. 

\begin{theorem}[\cite{MR2097164}]\label{cycle}
The Lagrangian cycle corresponding to $\mathbbm{1}_X$ is $(-1)^{\textup{dim}X}[\textup{qBl}_YX]$, where $Y$ denotes the singular subscheme of $X$ and $\textup{qBl}_YX$ denotes the quasi-symmetric blow-up of $X$ along $Y$.
\end{theorem}

The singular subscheme $Y$ can be viewed either as a subscheme of $X$ or a subscheme of $M$. If $h$ is a local equation of $X$ in $M$, then the ideal sheaf of $Y$ is locally generated by all partial derivatives of $h$ from the former perspective, or by all partial derivatives of $h$ together with $h$ from the latter perspective. 

According to Aluffi \cite{MR2097164}, given a closed embedding $W \subset V$ of schemes, there is a spectrum of relevant `blow-up' algebras, each one of which corresponds to a closed imbedding $V$ into an ambient (not necessarily nonsingular) scheme $M$. In the special case $M = V$, the blow-up algebra corresponding to $V \subset V$ is the Rees algebra of the Ideal sheaf of $W$. The constructions are functorial in the sense that any morphism $M \to N$ of ambient schemes of $X$ gives rise to an epimorphism of the blow-up algebras (with a reversed arrow). The quasi-symmetric blow-up algebra of $V$ along $W$ is defined to be the inverse limit of this system of blow-up algebras. Aluffi also defines the quasi-symmetric blow-up $\textup{qBl}_WV$ to be the projective scheme associated with the quasi-symmetric blow-up algebra.

The object $\textup{qBl}_WV$ may seem intangible at the first sight, but it is not as difficult to grasp as one may feel. In fact, the inverse system of blow-up algebras stabilizes at any nonsingular $M$, and the quasi-symmetric blow-up can be captured by the notion of `principal transform'. 

\begin{defin}[\cite{MR2097164}]
Let $W \subset V \subset M$ be closed imbeddings of schemes, with $M$ possibly singular. The principal transform of $V$ in the blow-up $\textup{Bl}_WM \xrightarrow{\rho} M$ of $M$ along $W$ is the residual to the exceptional divisor in $\rho^{-1}(V)$.
\end{defin}

The definition of residual subscheme can be found in Fulton's book \cite{MR732620} chapter 9. Its intuitive meaning in our context is the subscheme of $\textup{Bl}_WM$ obtained by subtracting one copy of the exceptional divisor from the total transformation of $V$. 

The principal transform and the quasi-symmetric blow-up are related in the following way:
\begin{theorem}[\cite{MR2097164}]\label{princ}
Let $W \subset V \subset M$ be closed imbeddings of schemes, with $M$ a nonsingular variety. The quasi-symmetric blow-up $\textup{qBl}_WM$ of $V$ along $W$ equals the principal transform of $V$ in $\textup{Bl}_WM$.
\end{theorem}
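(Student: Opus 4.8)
The plan is to reduce the equality of the two subschemes of $\textup{Bl}_WM$ to an explicit comparison of graded algebras, carried out affine-locally on $V$, where $\textup{Proj}$ and the formation of the inverse limit both commute with localization. So I would take $M = \textup{Spec}(R)$ with $R$ regular, and let $\mathcal{I} \supseteq \mathcal{J}$ be the ideals of $W$ and of $V$ in $R$, with $\mathcal{J} \subseteq \mathcal{I}$ because $W \subseteq V$. The first move is to write down the blow-up algebra attached to the ambient space $V \subseteq M$ explicitly as
\begin{equation*}
\mathcal{A}_M \;=\; \mathcal{O}_V \,\oplus\, \bigoplus_{n \geq 1} \mathcal{I}^n/\mathcal{J}\mathcal{I}^{n-1},
\end{equation*}
and to check that it is consistent with the data already fixed: when $M = V$ one has $\mathcal{J} = 0$ and $\mathcal{A}_V = \bigoplus_n \mathcal{I}^n$ is exactly the Rees algebra of the ideal of $W$ in $V$, and a morphism of ambient spaces induces the evident graded map in the reversed direction. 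Granting this description, the theorem splits into two independent assertions: that $\textup{Proj}(\mathcal{A}_M)$ is the principal transform when $M$ is nonsingular, and that $\mathcal{A}_M$ already computes the inverse limit that defines $\textup{qBl}_WV$.

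For the first assertion I would work on the standard charts of $\textup{Bl}_WM = \textup{Proj}\big(\bigoplus_n \mathcal{I}^n\big)$. Picking local generators $f_0, \dots, f_r$ of $\mathcal{I}$ and localizing at the chart where $f_0$ is invertible, the exceptional divisor $E$ has local equation $f_0$; and for each generator $g$ of $\mathcal{J}$ one writes $g = f_0 h$ with $h = g/f_0$ regular on the chart, using $\mathcal{J} \subseteq \mathcal{I}$. The total transform $\rho^{-1}(V)$ is cut out by the $g$'s, and passing to the residual to $E$ divides each generator by the local equation $f_0$ of $E$ (since $f_0$ is a nonzerodivisor on the chart), so the principal transform is cut out by the $h$'s. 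As the $h$'s are exactly the dehomogenizations of the elements of $\mathcal{J}$ placed in degree one inside $\bigoplus_n \mathcal{I}^n$, the principal transform is the closed subscheme defined by the homogeneous ideal $\mathcal{K}$ generated by $\mathcal{J}$ in degree one, which has $\mathcal{K}_n = \mathcal{J}\mathcal{I}^{n-1}$ for $n \geq 1$. Its homogeneous coordinate algebra agrees with $\mathcal{A}_M$ in every positive degree, so the two have the same $\textup{Proj}$, and gluing over the charts identifies the principal transform with $\textup{Proj}(\mathcal{A}_M)$.

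The second assertion is the heart of the matter, and I would build it on a single invariance: replacing $M$ by $M \times \Abb^1$, with $V$ embedded through the zero section, leaves $\mathcal{A}_M$ unchanged. Writing $\mathcal{I}' = \mathcal{I}R[z] + (z)$ and $\mathcal{J}' = \mathcal{J}R[z] + (z)$ and grading $R[z]$ by powers of $z$, a direct expansion shows that in both $(\mathcal{I}')^n$ and $\mathcal{J}'(\mathcal{I}')^{n-1}$ every strictly positive $z$-degree contributes the same module $\mathcal{I}^{n-1}$, so these extra contributions cancel in the quotient and leave $(\mathcal{I}')^n/\mathcal{J}'(\mathcal{I}')^{n-1} \cong \mathcal{I}^n/\mathcal{J}\mathcal{I}^{n-1}$; hence $\mathcal{A}_{M \times \Abb^1} \cong \mathcal{A}_M$. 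I would then deduce stabilization from this. First, the nonsingular ambient spaces are cofinal: given any ambient $N$, a polynomial presentation $\mathcal{O}_V = k[x_1, \dots, x_s]/\mathfrak a$ yields a nonsingular $M_0 = \Abb^s$, and lifting each $x_i$ through the surjection $\mathcal{O}_N \twoheadrightarrow \mathcal{O}_V$ defines an ambient-space morphism $N \to M_0$. Second, any two nonsingular embeddings $V \hookrightarrow M_0$ and $V \hookrightarrow M_1$ are compared through the nonsingular product $M_0 \times M_1$, into which $V$ maps diagonally: since $M_1$ is smooth, the embedding $V \to M_1$ extends to a morphism $M_0 \to M_1$, and the resulting shear straightens the diagonal onto $M_0 \times \{0\}$, so the projection $M_0 \times M_1 \to M_0$ is, up to this automorphism, the addition of a smooth factor. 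The invariance above then makes the transition maps $\mathcal{A}_{M_0} \to \mathcal{A}_{M_0 \times M_1} \leftarrow \mathcal{A}_{M_1}$ isomorphisms, and the same factoring-through-the-graph argument shows that every morphism between nonsingular ambient spaces induces an isomorphism. Consequently the nonsingular ambient spaces form a cofinal family on which all transition maps are isomorphisms, so the inverse limit is their common value $\mathcal{A}_M$; combined with the first assertion this identifies $\textup{qBl}_WV$ with the principal transform.

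The step I expect to be the main obstacle is precisely this stabilization, not the residual computation. The delicate points are verifying that the nonsingular ambient spaces are genuinely cofinal in the system of all ambient spaces (so that the inverse limit may be evaluated on them) and that the graph-straightening together with the invisible-smooth-factor computation furnishes isomorphisms compatible with all transition maps, so that the local identifications glue to a single global isomorphism of graded sheaves of algebras. By contrast, once $\mathcal{A}_M$ is in hand, matching $\textup{Proj}(\mathcal{A}_M)$ with the residual to the exceptional divisor is a routine chartwise check.
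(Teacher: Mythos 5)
You should first note that the paper contains no proof of this statement: Theorem \ref{princ} is imported verbatim from \cite{MR2097164}, so the only benchmark is Aluffi's original argument. Your architecture matches it: posit the blow-up algebra attached to an ambient $M$ as $\mathcal{A}_M=\mathcal{O}_V\oplus\bigoplus_{n\geq 1}\mathcal{I}^n/\mathcal{J}\mathcal{I}^{n-1}$, check on the charts $D_+(f_0)$ of $\textup{Bl}_WM$ that $\textup{Proj}(\mathcal{A}_M)$ is the residual to the exceptional divisor in $\rho^{-1}(V)$, prove invariance of $\mathcal{A}_M$ under $M\mapsto M\times\Abb^1$ with $V$ in the zero section, and deduce stabilization of the inverse system at nonsingular ambients. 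The chartwise residual computation and the $z$-degree bookkeeping showing $(\mathcal{I}')^n/\mathcal{J}'(\mathcal{I}')^{n-1}\cong\mathcal{I}^n/\mathcal{J}\mathcal{I}^{n-1}$ are both correct.

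The gap sits exactly where you predicted, in the stabilization, and it is more than a delicacy: the assertion that ``since $M_1$ is smooth, the embedding $V\to M_1$ extends to a morphism $M_0\to M_1$'' is false for a general nonsingular $M_1$ (take $V$ a pair of points inside $M_0=\Abb^1$ and inside a smooth affine genus-one curve $M_1$: every morphism $\Abb^1\to M_1$ is constant, so no extension exists), and the subsequent ``shear'' $(m_0,m_1)\mapsto(m_0,m_1-f(m_0))$ uses a subtraction that only makes sense when $M_1$ is an affine space. The same restriction afflicts your cofinality step, since lifting the presentation $x_i$ through $\mathcal{O}_N\twoheadrightarrow\mathcal{O}_V$ requires $N$ affine. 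As written, your argument therefore only controls the subsystem of affine-space ambients and never connects it to the given, arbitrary nonsingular $M$; this is precisely the comparison the theorem needs. The missing ingredient is an invariance statement strong enough to absorb the \'etale factor that appears when a smooth morphism of ambients is put in local coordinates (Zariski-locally a smooth map only factors as \'etale over $N\times\Abb^n$, and the straightening of $V$ onto the zero section is likewise only available \'etale- or formally-locally). One standard repair: observe that $\mathcal{J}^n\subseteq\mathcal{J}\mathcal{I}^{n-1}$, so the degree-$n$ piece of $\mathcal{A}_M$ depends only on the finite infinitesimal neighborhood $\mathcal{O}_M/\mathcal{J}^n$ of $V$ in $M$; hence $\mathcal{A}_M$ is local on $V$ and unchanged by \'etale morphisms of ambient spaces, after which your product-and-graph comparison goes through with \'etale-local coordinates in place of the global shear. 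With that lemma supplied, the proof is complete.
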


Therefore, the principal transform of $V$ in the blow-up $\textup{Bl}_WM$ as an abstract scheme, is independent of the ambient nonsingular variety $M$ by which we realize the principal transform.

Returning to the setup of Theorem \ref{cycle}, as a direct consequence of the previous discussion, we have:
\begin{theorem}[\cite{MR2097164}]\label{shadow}
The shadow of $[\textup{qBl}_YX]$ is $(-1)^{\textup{dim}X}\breve{c}_{\textup{SM}}(\mathbbm{1}_X)$.
\end{theorem}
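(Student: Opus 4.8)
The plan is to derive the statement as a formal consequence of Theorem \ref{cycle} and the description of the MacPherson transformation $c_*$ as the composite $\mathcal{C} \leadsto \mathcal{L} \leadsto \mathcal{A}$ followed by the duality involution. All of the geometric content is already packaged in Theorem \ref{cycle}; what remains is to transport that identity through the two natural transformations while keeping careful track of the sign $(-1)^{\dim X}$.

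First I would abbreviate $S := \pi_*\bigl(c(\zeta) \cap [\textup{qBl}_YX]\bigr)$, the shadow of the cycle $[\textup{qBl}_YX]$, so that the assertion to be proved reads $S = (-1)^{\dim X}\breve{c}_{\textup{SM}}(\mathbbm{1}_X)$. By Theorem \ref{cycle}, the Lagrangian cycle associated to $\mathbbm{1}_X$ under $\mathcal{C} \leadsto \mathcal{L}$ is $(-1)^{\dim X}[\textup{qBl}_YX]$. Since the shadow homomorphism $L(X) \to A_*(X)$ is $\Zbb$-linear---being assembled from the cap product with $c(\zeta)$ and the proper push-forward $\pi_*$---the shadow of this Lagrangian cycle equals $(-1)^{\dim X} S$.

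Next I would invoke the definition of $c_{\textup{SM}}$ as the dual of the shadow of the Lagrangian cycle, together with the two elementary properties of the duality $\gamma \mapsto \breve{\gamma} = \sum_i (-1)^i \gamma_i$: it commutes with multiplication by a scalar, and it is an involution, $\breve{\breve{\gamma}} = \gamma$. The first property gives
\begin{equation*}
c_{\textup{SM}}(\mathbbm{1}_X) = (-1)^{\dim X}\,\breve{S},
\end{equation*}
and applying the involution to both sides, followed by multiplication by $(-1)^{\dim X}$, yields $S = (-1)^{\dim X}\breve{c}_{\textup{SM}}(\mathbbm{1}_X)$, which is the claim.

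I do not anticipate a genuine obstacle; once Theorem \ref{cycle} is in hand the argument collapses to a short piece of sign bookkeeping. The only point requiring vigilance is that the factor $(-1)^{\dim X}$ should pass unchanged through both the $\Zbb$-linear shadow map and the duality involution---neither being absorbed nor inadvertently doubled---so that it surfaces exactly once on the right-hand side of the final identity.
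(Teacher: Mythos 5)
Your proposal is correct and is exactly the argument the paper has in mind: the paper presents Theorem \ref{shadow} as ``a direct consequence of the previous discussion,'' namely Theorem \ref{cycle} combined with the description of $c_*$ as the composite $\mathcal{C} \leadsto \mathcal{L} \leadsto \mathcal{A}$ followed by the duality involution, which is precisely the sign bookkeeping you carry out. The handling of the factor $(-1)^{\dim X}$ through the $\Zbb$-linear shadow map and the involution is accurate.
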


In the next section, we will see that under the setup of Theorem \ref{cycle}, the quasi-symmetric blow-up of $X$ along $Y$ (which is also the principal transform of $X$ in the blow-up $\textup{Bl}_YM$) arises from the symmetric algebra of an ideal sheaf. We will compute this quasi-symmetric blow-up explicitly from a fundamental exact sequence in the theory of logarithmic derivations, and study how it is imbedded in the projectivized cotangent bundle of $M$.

\section{The imbedding of the quasi-symmetric blow-up in the cotangent bundle}

In this section and the following, $X$ is a nonsingular algebraic variety defined over an algebraically closed field of characteristic 0, $D$ is a reduced effective divisor (hypersurface) in $X$, $D_{sing}$ is the singular subscheme of $D$. Let $i: D \to X$ and $j: D_{sing} \to X$ be the inclusions of $D$ and $D_{sing}$ into $X$ respectively. For readers who follow the discussion of the previous section, there is a caution in the change of notation. The schemes $X$, $D$ and $D_{sing}$ play the role of $M$, $X$ and $Y$ in theorem \ref{cycle} respectively. The ideal sheaf of $D_{sing}$ in $\mathscr{O}_D$ will be denoted by $\mathscr{I}$, and the ideal sheaf of $D_{sing}$ in $\mathscr{O}_X$ will be denoted by $\tilde{\mathscr{I}}$.

The sheaf of logarithmic derivations $\textup{Der}_X(-\log D)$ along $D$ is a subsheaf of the sheaf of regular derivations $\textup{Der}_X$, which we also identify with the sheaf of sections of the tangent bundle of $X$. Over an open subset $U$ of $X$ where the divisor $D$ has a local equation $h$,
\begin{equation*}
\textup{Der}_X(-\log D)(U) = \{ \theta \in \textup{Der}_X(U) \ | \ \theta h \in h \cdot \mathscr{O}_X(U) \}.
\end{equation*}

\begin{example}
Let $X = \Abb^2$ and the ideal of $D$ be $(xy)$. The module of logarithmic derivations $\textup{Der}_X(-\log D)$ is free and it is generated by $x\partial_x, y\partial_y$.
\end{example}

The sheaf of logarithmic derivation $\textup{Der}_X(-\log D)$ is a reflexive sheaf for any reduced effective divisor $D$. Its dual is the sheaf of logarithmic differential $1$-forms $\Omega_X^{1}(\log D)$. One can also define the sheaf of logarithmic differential $k$-forms $\Omega_X^{k}(\log D)$ and form a logarithmic De Rham complex $\Omega_X^{\bullet}(\log D)$. For a discussion on properties of these sheaves at the introductory level, we refer the readers to \cite{MR586450}.

In this paper, the most significant aspect of the sheaf of logarithmic derivations we will use is that it fits into a fundamental exact sequence of sheaves of $\mathscr{O}_X$-modules:
\begin{equation}\label{funda}
0 \to \textup{Der}_X(-\log D) \to \textup{Der}_X \to i_*\mathscr{I}(D) \to 0
\end{equation}

In addition, the sheaf $\mathscr{I}$ and $\tilde{\mathscr{I}}$ are related by the following exact sequence:
\begin{equation}\label{jacob}
0 \to \mathscr{O}_X(-D) \to \tilde{\mathscr{I}} \to i_*\mathscr{I} \to 0.
\end{equation}

The morphism $\mathscr{O}_X(-D) \to \tilde{\mathscr{I}}$ is the inclusion of the equation of $D$ into the ideal of $D^{s}$.

\begin{remark}
The sheaf $i_*\mathscr{I}$ is an $\mathscr{O}_X$-module, but not an $\mathscr{O}_X$-ideal. In fact, applying the snake lemma to the following morphism of exact sequences

\centerline{\xymatrix{
0 \ar[r] & \mathscr{O}_X(-D) \ar[d] \ar[r] & \mathscr{O}_X \ar[d] \ar[r] & i_*\mathscr{O}_{D} \ar[d] \ar[r] & 0 \\
0 \ar[r] & \tilde{\mathscr{I}} \ar[r]             & \mathscr{O}_X  \ar[r]         & j_*\mathscr{O}_{D_{sing}} \ar[r]       & 0    } }

shows that $i_*\mathscr{I}$ is isomorphic to the kernel of the map $i_*\mathscr{O}_{D} \to j_*\mathscr{O}_{D_{sing}}$.
\end{remark}

To understand exact sequence \eqref{funda}, we consider as before an open subset $U$ over which the divisor $D$ has a local equation $h$. The morphism 
\begin{equation*}
\restr{\textup{Der}_X(-D)}{U} \to \restr{i_*\mathscr{O}_D}{U}
\end{equation*}
is defined by
\begin{equation*}
\theta \otimes g \mapsto g\cdot \overline{\theta h}
\end{equation*}
where $\theta \in \textup{Der}_X(U)$ and $g \in \Gamma(U, \mathscr{O}_X)$ which is treated as a section over $U$ of the line bundle $\mathscr{O}_X(-D)$ through the trivialization $\restr{\mathscr{O}_X}{U} \cong \restr{\mathscr{O}_X(-D)}{U}$. These locally defined morphisms glue together to give a morphism $\textup{Der}_X(-D) \to i_*\mathscr{O}_D$ whose image is generated by all partial derivatives of the local equation of $D$. Through this description we also see that $\theta \otimes 1$ maps to $0$ if and only if $\overline{\theta h} = 0$ in $\Gamma (U,\mathscr{O}_D)$, which is equivalent to $\theta h$ belonging to the ideal generated by $h$ in $\Gamma (U, \mathscr{O}_X)$. The latter condition is also equivalent to $\theta$ being a logarithmic derivation.

\begin{remark}
Restrict the morphism $\textup{Der}_X(-D) \to i_*\mathscr{O}_D$ to $D$ and dualize it, we get a morphism $\mathscr{O}_D \to \restr{\Omega_X^{1}(D)}{D}$. In \cite{MR2097164} \textsection 3.7, this morphism is constructed by considering an exact sequence which involves the bundle of principal parts of $\mathscr{O}(D)$. Later in this paper the morphism $\textup{Der}_X \to i_*\mathscr{I}(D)$ will be used to construct an imbedding of the quasi-symmetric blow-up $\textup{qBl}_{D_{sing}}D$ in the cotangent bundle of $X$. This imbedding is the same as the one appeared in \cite{MR2097164} \textsection 3.7.
\end{remark}

Before continuing our discussion, we first recall the definitions of free divisors and divisors whose Jacobian ideal is of linear type.

\begin{defin}
A reduce effective divisor is free if $\textup{Der}_X(-\log D)$ is locally free (of rank equal to the dimension of $X$) \cite{MR586450}. A divisor is Jacobian ideal of linear type if $\tilde{\mathscr{I}}$ is an ideal sheaf of linear type, which is saying $\textup{Rees}_{\mathscr{O}_X}(\tilde{\mathscr{I}}) \cong \textup{Sym}_{\mathscr{O}_X}(\tilde{\mathscr{I}})$ \cite{MR2500865}. 
\end{defin}

We also need a classical result concerning the symmetric algebras of modules.

\begin{theorem}[\cite{MR1322960} Appendix A.2.3]\label{sym}
Let $A$ be a commutative ring and 
\begin{equation*}
0 \to K \to M \to N \to 0
\end{equation*}
an exact sequence of $A$-modules. Then the induced homomorphism
\begin{equation*}
\textup{Sym}_A(M) \to \textup{Sym}_A(N)
\end{equation*} 
is surjective and the kernel of this homomorphism is generated by the image of $K$ in degree $1$ of the graded algebra $\textup{Sym}_A(M)$.
\end{theorem}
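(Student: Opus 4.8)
The plan is to exploit the universal property of the symmetric algebra: for any commutative $A$-algebra $B$, the set of $A$-algebra homomorphisms $\textup{Sym}_A(M) \to B$ is in natural bijection with the set of $A$-module homomorphisms $M \to B$, the correspondence being restriction to degree one. In particular $\textup{Sym}_A(M)$ is generated as an $A$-algebra by its degree one piece $\textup{Sym}^1_A(M) = M$, and the assignment $M \mapsto \textup{Sym}_A(M)$ is functorial. Both assertions of the theorem will be extracted from this universal property rather than from the explicit tensor-algebra presentation.

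Surjectivity is immediate. The homomorphism $\textup{Sym}_A(M) \to \textup{Sym}_A(N)$ is the one functorially induced by $M \to N$, and it restricts in degree one to the given surjection $M \to N$. Since $\textup{Sym}_A(N)$ is generated as an $A$-algebra by its degree one piece $N$, and every element of $N$ lies in the image, the homomorphism is onto.

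For the kernel, let $J \subseteq \textup{Sym}_A(M)$ denote the ideal generated by the image of $K$ under $K \hookrightarrow M = \textup{Sym}^1_A(M)$. Because $K$ maps to $0$ in $N$, it maps to $0$ in $\textup{Sym}_A(N)$; hence $J$ is contained in the kernel, and the homomorphism factors as $\textup{Sym}_A(M) \twoheadrightarrow \textup{Sym}_A(M)/J \to \textup{Sym}_A(N)$. It remains to show that the induced map $\textup{Sym}_A(M)/J \to \textup{Sym}_A(N)$ is injective, which I would do by producing an inverse. Consider the composite $A$-module map $M \to \textup{Sym}_A(M) \to \textup{Sym}_A(M)/J$; by construction it annihilates $K$, so it factors through $N = M/K$ to give an $A$-module map $N \to \textup{Sym}_A(M)/J$. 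The universal property then promotes this to an $A$-algebra homomorphism $\textup{Sym}_A(N) \to \textup{Sym}_A(M)/J$.

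Finally I would verify that the two $A$-algebra homomorphisms $\textup{Sym}_A(M)/J \rightleftarrows \textup{Sym}_A(N)$ are mutually inverse. Since both source algebras are generated in degree one, it suffices to check that the two composites agree with the respective identity maps on degree one elements, where they reduce to the identity on $N$ and to the identity on $M/K \cong N$; this is a direct unwinding of the definitions. The only point demanding care is the bookkeeping of the universal property, ensuring that the factorizations above are well defined as $A$-algebra homomorphisms and not merely as $A$-module maps, but no genuine obstacle arises here.
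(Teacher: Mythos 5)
Your argument is correct. Note that the paper itself gives no proof of this statement—it is quoted as a classical result from \cite{MR1322960} (Appendix A.2.3)—so there is nothing internal to compare against; your universal-property argument (surjectivity from generation in degree one, and the two-sided inverse $\textup{Sym}_A(N)\to \textup{Sym}_A(M)/J$ obtained by factoring $M\to \textup{Sym}_A(M)/J$ through $N=M/K$) is precisely the standard proof found in that reference, and all the factorizations you invoke are legitimate.
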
 

We will apply a sheafified version of this theorem on exact sequence \eqref{funda} and \eqref{jacob} respectively.

\begin{prop}\label{quasi}
Let $D$ be a divisor with Jacobian ideal of linear type, we have
\begin{equation*}
\textup{Proj}\Big(\textup{Sym}_{\mathscr{O}_X}(i_*\mathscr{I})\Big) \cong \textup{qBl}_{D_{sing}}{D}.
\end{equation*}
\end{prop}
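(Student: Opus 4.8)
The plan is to realize the quasi-symmetric blow-up as the $\textup{Proj}$ of a quotient of the Rees algebra, and then match that quotient with the symmetric algebra of $i_*\mathscr{I}$ by means of the sheafified version of Theorem \ref{sym}. First I would record the two facts that turn the right-hand side into something computable. By Theorem \ref{princ}, $\textup{qBl}_{D^s}D$ is the principal transform of $D$ in the blow-up $\rho\colon \textup{Bl}_{D^s}X \to X$; and since $\tilde{\mathscr{I}}$ is the ideal sheaf of $D^s$ in the nonsingular variety $X$, we have $\textup{Bl}_{D^s}X = \textup{Proj}\big(\textup{Rees}_{\mathscr{O}_X}(\tilde{\mathscr{I}})\big)$. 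The hypothesis that $D$ has Jacobian ideal of linear type enters exactly here: it supplies a graded isomorphism $\textup{Rees}_{\mathscr{O}_X}(\tilde{\mathscr{I}}) \cong \textup{Sym}_{\mathscr{O}_X}(\tilde{\mathscr{I}})$, so that $\textup{Bl}_{D^s}X = \textup{Proj}\big(\textup{Sym}_{\mathscr{O}_X}(\tilde{\mathscr{I}})\big)$. Without linear type the $\textup{Proj}$ of the symmetric algebra would in general differ from the blow-up, so this is where the hypothesis is indispensable.

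Next I would apply the sheafified form of Theorem \ref{sym} to the exact sequence \eqref{jacob}, taking $K=\mathscr{O}_X(-D)$, $M=\tilde{\mathscr{I}}$ and $N=i_*\mathscr{I}$. This yields a surjection $\textup{Sym}_{\mathscr{O}_X}(\tilde{\mathscr{I}}) \twoheadrightarrow \textup{Sym}_{\mathscr{O}_X}(i_*\mathscr{I})$ whose kernel is the homogeneous ideal generated in degree $1$ by the image of $\mathscr{O}_X(-D)$. Locally that image is the degree-$1$ element corresponding to a local equation $h$ of $D$, since $\mathscr{O}_X(-D)\to\tilde{\mathscr{I}}$ is the inclusion of the equation of $D$. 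Taking $\textup{Proj}$ and transporting everything through the linear-type isomorphism identifies $\textup{Proj}\big(\textup{Sym}_{\mathscr{O}_X}(i_*\mathscr{I})\big)$ with the closed subscheme of $\textup{Bl}_{D^s}X$ cut out by the degree-$1$ element $ht \in \textup{Rees}_{\mathscr{O}_X}(\tilde{\mathscr{I}})_1$, i.e. by $h$ regarded as a section of the tautological sheaf $\mathscr{O}(1)=\mathscr{O}(-E)$, where $E$ is the exceptional divisor. I should check that the construction of Theorem \ref{sym} sheafifies, but since $h$ is only a local equation this is a routine gluing over an affine cover on which $D$ is principal.

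The crux, and the step I expect to be the main obstacle, is to verify that this degree-$1$ zero locus is exactly the principal transform, namely the residual of $E$ in the total transform $\rho^{-1}(D)$. Here I would argue locally. The pullback $\rho^{*}h$, a section of $\mathscr{O}_{\textup{Bl}_{D^s}X}$ cutting out $\rho^{-1}(D)$, factors as $\rho^{*}h = s_E\cdot \tilde{h}$, where $s_E$ is the canonical section of $\mathscr{O}(E)$ vanishing on $E$ and $\tilde{h}$ is the section of $\mathscr{O}(-E)=\mathscr{O}(1)$ determined by the degree-$1$ element $ht$. Because $s_E$ is a nonzerodivisor, the residual ideal $(\rho^{*}h):(s_E)$ equals $(\tilde{h})$, so Fulton's residual scheme is precisely $V(\tilde{h})=\textup{Proj}\big(\textup{Rees}_{\mathscr{O}_X}(\tilde{\mathscr{I}})/(ht)\big)$. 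Combining this with the identification from the previous paragraph gives $\textup{Proj}\big(\textup{Sym}_{\mathscr{O}_X}(i_*\mathscr{I})\big)\cong \textup{qBl}_{D^s}D$, as desired. The delicate points are the factorization of $\rho^{*}h$ through the exceptional divisor and the check that removing one copy of $E$ (the ideal-quotient definition of the residual) corresponds on the algebra side precisely to passing to the degree-$1$ quotient; these are what tie the purely algebraic manipulation of symmetric algebras to the geometric principal transform.
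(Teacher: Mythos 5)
Your proposal is correct and follows essentially the same route as the paper's own proof: both apply the sheafified Theorem \ref{sym} to the sequence \eqref{jacob}, use the linear-type hypothesis to identify $\textup{Proj}\big(\textup{Rees}_{\mathscr{O}_X}(\tilde{\mathscr{I}})\big)$ with the ambient blow-up, and then check locally that the degree-$1$ element $ht$ cuts out precisely the residual of the exceptional divisor in the total transform (the paper phrases your factorization $\rho^*h = s_E\cdot\tilde h$ as the chart computation $h = g\cdot(h/g)$ on $D_+(gt)$). Your ideal-quotient justification of the residual step is just a slightly more explicit rendering of the same local argument.
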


\begin{proof}
Applying theorem \ref{sym} to exact sequence \eqref{jacob}, and using the condition that $D$ is Jacobian of linear type, we get an epimorphism:
\begin{equation*}
\textup{Rees}_{\mathscr{O}_X}(\tilde{\mathscr{I}}) \to \textup{Sym}_{\mathscr{O}_X}(i_*\mathscr{I})
\end{equation*}
whose kernel is generated by $\mathscr{O}_X(-D)$ in degree $1$ of $\textup{Rees}_{\mathscr{O}_X}(\tilde{\mathscr{I}})$. Let us examine the kernel locally. Over an affine open set $U = \textup{Spec}(A)$ of $X$, assuming $h$ is the equation of $D$ in $U$, the Rees algebra takes the form
\begin{equation*}
\textup{Rees}_A(\tilde{I}) = A \oplus \tilde{I}t \oplus \tilde{I}^2t^2 \oplus \cdots
\end{equation*}
and the ideal defining $\textup{Sym}_A(I)$ is generated by $ht$. Consider a nonzero element $g$ in $\tilde{I}$. This element determines an open subset $D_{+}(gt)$ in $\textup{Proj}\Big(\textup{Rees}_A(\tilde{I})\Big)$, in which the ideal of $\textup{Proj}\Big(\textup{Sym}_A(I)\Big)$ is generated by 
\begin{equation*}
\frac{ht}{gt} = \frac{h}{g}.
\end{equation*}
In the open subset $D_+(gt)$, the ideal of the total transformation of $D$ is generated by $h$ and the ideal of the exceptional divisor is generated by $g$. Consequently the fraction $h/g$ locally defines the ideal of the principal transform of $D$ in the blow-up $\textup{Proj}\Big(\textup{Rees}_{\mathscr{O}_X}(\tilde{\mathscr{I}})\Big)$, which agrees with the quasi-symmetric blow-up of $D$ along $D_{sing}$ according to theorem \ref{princ}. 
\end{proof}

We want to introduce some auxiliary notation at this moment in order to smoothen the following discussion. From now on:
\begin{itemize}
\item $E$ will denote the cotangent bundle $\textup{Spec}\Big(\textup{Sym}_{\mathscr{O}_X}(\textup{Der}_X)\Big)$ of $X$, 
\item $F$ will denote the logarithmic cotangent cone $\textup{Spec}\Big(\textup{Sym}_{\mathscr{O}_X}\big(\textup{Der}_X(-\log D)\big)\Big)$ of $X$,
\item $C$ will denote the subcone $\textup{Spec}\Big(\textup{Sym}_{\mathscr{O}_X}\big((i_*\mathscr{I}(D)\big)\Big)$ of $E$,
\item $\sigma: E \to F$ will be the morphism induced by $\textup{Der}_X(-\log D) \to \textup{Der}_X$,
\item The corresponding projectivized cones and bundles of $E$, $F$, and $C$ will be denoted by $\mathbf{P}(E)$, $\mathbf{P}(F)$, and $\mathbf{P}(C)$. Notice that $\mathbf{P}(C)$ is the quasi-symmetric blow-up of $D$ along $D_{sing}$ because $\textup{Proj}\Big(\textup{Rees}_{\mathscr{O}_X}(i_*\mathscr{I})\Big) \cong \textup{Proj}\Big(\textup{Rees}_{\mathscr{O}_X}\big(i_*\mathscr{I}(D)\big)\Big)$.
\end{itemize}

Next, we apply theorem \ref{sym} to exact sequence \eqref{funda}.

\begin{prop}
The inverse image of the zero section of $F$ under $\sigma$ is $C$.
\end{prop}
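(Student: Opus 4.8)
The plan is to recognize this statement as a direct sheafified application of Theorem~\ref{sym} to the fundamental exact sequence \eqref{funda}, after correctly identifying the scheme-theoretic inverse image with an explicit quotient of symmetric algebras. No input beyond \eqref{funda} and Theorem~\ref{sym} should be required.

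First I would pin down the defining ideal of the zero section of $F$. Since $F = \textup{Spec}\big(\textup{Sym}_{\mathscr{O}_X}(\textup{Der}_X(-\log D))\big)$ is a cone over $X$, its zero section is the closed subscheme cut out by the augmentation (irrelevant) ideal, that is, the ideal sheaf generated by the degree-one piece $\textup{Der}_X(-\log D)$ inside $\textup{Sym}_{\mathscr{O}_X}(\textup{Der}_X(-\log D))$; equivalently, the zero section corresponds to the augmentation $\textup{Sym}_{\mathscr{O}_X}(\textup{Der}_X(-\log D)) \to \mathscr{O}_X$ killing everything of positive degree.

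Next I would compute the inverse image as a fiber product. The morphism $\sigma: E \to F$ is induced by the inclusion $\textup{Der}_X(-\log D) \hookrightarrow \textup{Der}_X$ of \eqref{funda}, i.e.\ by the $\mathscr{O}_X$-algebra homomorphism $\textup{Sym}_{\mathscr{O}_X}(\textup{Der}_X(-\log D)) \to \textup{Sym}_{\mathscr{O}_X}(\textup{Der}_X)$. The inverse image of the zero section is then the closed subscheme of $E$ defined by the ideal generated by the image of the augmentation ideal of $F$. Because that augmentation ideal is generated in degree one, the pullback ideal in $\textup{Sym}_{\mathscr{O}_X}(\textup{Der}_X)$ is precisely the ideal generated in degree one by the image of the submodule $\textup{Der}_X(-\log D) \subset \textup{Der}_X$. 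Concretely, the inverse image is $\textup{Spec}$ of $\textup{Sym}_{\mathscr{O}_X}(\textup{Der}_X) \otimes_{\textup{Sym}_{\mathscr{O}_X}(\textup{Der}_X(-\log D))} \mathscr{O}_X$, which is the quotient of $\textup{Sym}_{\mathscr{O}_X}(\textup{Der}_X)$ by that degree-one-generated ideal.

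Finally I would invoke Theorem~\ref{sym}. Applying its sheafified version to \eqref{funda} produces a surjection $\textup{Sym}_{\mathscr{O}_X}(\textup{Der}_X) \to \textup{Sym}_{\mathscr{O}_X}(i_*\mathscr{I}(D))$ whose kernel is exactly the ideal generated in degree one by the image of $\textup{Der}_X(-\log D)$. Identifying this kernel with the pullback ideal of the previous step, I conclude that the coordinate algebra of the inverse image is isomorphic to $\textup{Sym}_{\mathscr{O}_X}(i_*\mathscr{I}(D))$, whence the inverse image of the zero section equals $\textup{Spec}\big(\textup{Sym}_{\mathscr{O}_X}(i_*\mathscr{I}(D))\big) = C$. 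I expect the main (and essentially only) obstacle to be the bookkeeping: checking that the irrelevant ideal really does cut out the zero section scheme-theoretically, and that forming the inverse image commutes with the degree-one generation. Both are local statements, verifiable on affine opens $U = \textup{Spec}(A)$ where the relevant sheaves become finitely generated $A$-modules and Theorem~\ref{sym} applies verbatim.
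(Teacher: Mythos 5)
Your proposal is correct and follows exactly the paper's own argument: identify the zero section of $F$ with the vanishing of the degree-one augmentation ideal, pull that ideal back along $\sigma$ to get the ideal generated by the image of $\textup{Der}_X(-\log D)$ in $\textup{Sym}_{\mathscr{O}_X}(\textup{Der}_X)$, and apply Theorem~\ref{sym} to the sequence \eqref{funda} to identify the quotient with $\textup{Sym}_{\mathscr{O}_X}(i_*\mathscr{I}(D))$. The paper's proof is a three-sentence version of the same reasoning; your extra bookkeeping about the fiber product and locality is harmless but not needed.
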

\begin{proof}
This is immediate. The ideal of the zero section of $F$ is generated by $\textup{Der}_X(-\log D)$ in $\textup{Sym}_{\mathscr{O}_X}(\textup{Der}_X (-\log D))$. Thus the ideal of the inverse image of the zero section of $F$ in $E$ is generated by the image of $\textup{Der}_X(-\log D)$ in $\textup{Sym}_{\mathscr{O}_X}(\textup{Der}_X)$. This ideal also defines $\textup{Sym}_{\mathscr{O}_X}((i_*\mathscr{I}(D))$ according to theorem \ref{sym}.
\end{proof}

In the rest of the paper we will focus on free divisors with Jacobian ideal of linear type. Under this condition $F$ becomes a vector bundle of rank $n=\textup{dim}X$. The morphism $\sigma$ becomes a linear map between vector bundles. The cone $C$ can be thought of as the fiberwise kernel of $\sigma$.

\begin{example}
Again consider the case that $X=\mathbb{A}^2$ and $D$ is the normal crossing divisor defined by the ideal $(xy)$. The vector bundles $E$ and $F$ are both trivial of rank $2$. The homomorphism on the affine coordinate rings is:
\begin{equation*}
\begin{split}
k[x,y][A,B] &\to k[x,y][A,B] \\
A &\mapsto xA \\
B &\mapsto yB.
\end{split}
\end{equation*}
Let $v = v_1\mathbf{e_1} + v_2\mathbf{e_2}$ be a vector in $E$ over the point $(a,b) \in \mathbb{A}^2$. Then its image in $F$ by $\sigma$ is the vector $av_1\mathbf{e_1} + bv_2\mathbf{e_2}$ over the same point $(a,b)$. From this description we see that $C$ has a $2$-dimensional fiber over the point $(0,0)$; $1$-dimensional fibers generated by $\mathbf{e_2}$ over points on the $x$-axis; $1$-dimensional fibers generated by $\mathbf{e_1}$ over points on the $y$-axis; and $0$-dimensional fibers elsewhere. The projectivized cone $\mathbf{P}(C)$ has one copy of $\mathbb{P}^1$ over the point $(0,0)$, and two separate lines meeting $\mathbb{P}^1$ at two distinct points. It is the same as the principal transform of $D$ along its singular subscheme $(0,0)$, as the exceptional divisor of the blow-up of $D$ along the origin contains two copies of $\mathbb{P}^1$.
\end{example}

With these preparations, we can give a rather transparent description of the imbedding of $\mathbf{P}(C)$ in $\mathbf{P}(E)$.

\begin{prop}\label{normal}
Under the condition that $D$ is a free divisor with linear type Jacobian ideal, $\mathbf{P}(C)$ is a locally complete intersection in $\mathbf{P}(E)$. The normal bundle of $\mathbf{P}(C)$ in $\mathbf{P}(E)$ is isomorphic to $p^*F \otimes \mathscr{O}(1)$ where $p$ is the projection $\mathbf{P}(C) \to X$.
\end{prop}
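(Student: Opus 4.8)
The plan is to realize $P(C)$ as the zero scheme of a regular section of an explicit rank-$n$ bundle on $P(E)$, after which the normal bundle can be read off as the restriction of that bundle.

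First I would build the section. On $P(E)=\textup{Proj}(\textup{Sym}_{\mathscr{O}_X}(\textup{Der}_X))$ there is the tautological surjection $\pi^{*}\textup{Der}_X \to \mathscr{O}(1)$, where $\pi\colon P(E)\to X$ is the projection. Precomposing with the pullback of the inclusion $\textup{Der}_X(-\log D)\hookrightarrow \textup{Der}_X$ from \eqref{funda} yields a morphism $\psi\colon \pi^{*}\textup{Der}_X(-\log D)\to \mathscr{O}(1)$, that is, a global section $\psi$ of
\[
\mathcal{G}:=\pi^{*}\big(\textup{Der}_X(-\log D)\big)^{\vee}\otimes \mathscr{O}(1)=\pi^{*}F\otimes \mathscr{O}(1),
\]
a vector bundle of rank $n$ because $D$ is free (here I write $F$ also for the associated locally free sheaf $\textup{Der}_X(-\log D)^{\vee}=\Omega_X^{1}(\log D)$). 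By Theorem \ref{sym} applied to \eqref{funda}, the homogeneous ideal of $P(C)$ in $\textup{Proj}(\textup{Sym}(\textup{Der}_X))$ is generated in degree $1$ by the image of $\textup{Der}_X(-\log D)$; in a local frame $\theta_1,\dots,\theta_n$ of $\textup{Der}_X(-\log D)$ these generators are precisely the components $\psi(\theta_1),\dots,\psi(\theta_n)$ of $\psi$. Hence $P(C)=Z(\psi)$ scheme-theoretically.

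Next I would verify that $\psi$ is a regular section, which is the crux of the argument. Since $P(E)$ is a $\Pbb^{n-1}$-bundle over the $n$-dimensional $X$, we have $\dim P(E)=2n-1$, while $\textup{rank}\,\mathcal{G}=n$; thus it suffices to show that $Z(\psi)$ has pure codimension $n$, i.e. pure dimension $n-1$. By Proposition \ref{quasi} (together with the remark that $\textup{Proj}(\textup{Sym}(i_*\mathscr{I}))\cong \textup{Proj}(\textup{Sym}(i_*\mathscr{I}(D)))$), $P(C)\cong \textup{qBl}_{D^s}D$, which by Theorem \ref{princ} is the principal transform of $D$ in $\textup{Bl}_{D^s}X\xrightarrow{\rho}X$, namely the subscheme residual to the exceptional divisor in the total transform $\rho^{*}D$. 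As the exceptional divisor occurs in the Cartier divisor $\rho^{*}D$ with multiplicity at least $1$ (because $D^s\subset D$), this residual scheme is an effective Cartier divisor in the $n$-dimensional variety $\textup{Bl}_{D^s}X$, hence is pure of dimension $n-1$. Therefore $Z(\psi)$ has pure codimension $n=\textup{rank}\,\mathcal{G}$ in the smooth, and in particular Cohen--Macaulay, variety $P(E)$, so the $n$ local equations $\psi(\theta_1),\dots,\psi(\theta_n)$ form a regular sequence and $\psi$ is a regular section.

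Finally, the conclusion is formal: the zero scheme of a regular section of a vector bundle is a local complete intersection whose normal bundle is the restriction of the bundle. Thus $P(C)$ is a local complete intersection in $P(E)$ and, since $\pi|_{P(C)}=p$,
\[
N_{P(C)/P(E)}\cong \mathcal{G}\big|_{P(C)}=p^{*}F\otimes \mathscr{O}(1).
\]
The main obstacle is the regularity step. Exhibiting $P(C)$ as a zero locus and computing the normal bundle are purely formal once regularity is known; what genuinely requires the hypotheses is the codimension count, which uses freeness (so that $\mathcal{G}$ has rank exactly $n$) and the linear-type condition (so that Proposition \ref{quasi} identifies $P(C)$ with the principal transform and thereby forces pure dimension $n-1$). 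Without this, one could not exclude excess-dimensional components of $Z(\psi)$ lying over the deeper strata of $D^s$.
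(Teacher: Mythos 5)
Your argument is correct, and it reaches the normal bundle by a genuinely different mechanism than the paper. The paper forms the Cartesian square exhibiting $C$ as the intersection of the graph $\Gamma(\sigma)\subset E\oplus F$ with $E$, projectivizes, and identifies $N_{P(C)/P(E)}$ with $\phi^*N$ where $N\cong q^*F\otimes\mathscr{O}(1)$ is the normal bundle of $P(\Gamma(\sigma))$ in $P(E\oplus F)$, using a rank comparison together with Fulton's B.7.3 and the Euler sequences. You instead realize $P(C)$ directly as the zero scheme of the section $\psi$ of $\pi^*F\otimes\mathscr{O}(1)$ obtained by composing the tautological quotient $\pi^*\textup{Der}_X\to\mathscr{O}(1)$ with the inclusion from \eqref{funda}; Theorem \ref{sym} applied to \eqref{funda} gives the scheme-theoretic equality $Z(\psi)=P(C)$, and the normal bundle is then the restriction of the bundle. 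The step that actually carries the hypotheses is identical in both proofs: the codimension count, namely that $P(C)$ has pure dimension $n-1$ because it is an effective Cartier divisor (the principal transform, with local equation $h/g$) in the $n$-dimensional $\textup{Bl}_{D^s}X$, combined with the $n$ local generators supplied by freeness and the Cohen--Macaulayness of $P(E)$. Your packaging has a concrete payoff downstream: since the cycle of the zero scheme of a regular section of a rank-$n$ bundle is the $n$-th Chern class of that bundle capped with the fundamental class, Corollary \ref{imbed} falls out of your setup at once, with no need for the deformation $t\sigma$ and the dynamic interpretation of the intersection product; the paper's graph construction is precisely what makes that deformation argument available instead.
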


\begin{proof}
Consider the following Cartesian square

\begin{displaymath}
\xymatrix{
&C \ar[d] \ar[r] & E \ar[d] \\
&\Gamma(\sigma) \ar[r] & E \oplus F}
\end{displaymath}
In the diagram $\Gamma(\sigma)$ is the graph of $\sigma: E \to F$. Set-theoretically and fiberwisely it consists of vectors of the form $(v, \sigma(v))$. The map $E \to E \oplus F$ is the inclusion $v \mapsto (v,0)$.  

Projectivize the above diagram we get another Cartesian square

\begin{displaymath}
\xymatrix{
&\mathbf{P}(C) \ar[d]^{\phi} \ar[r] & \mathbf{P}(E) \ar[d] \\
&\mathbf{P}(\Gamma(\sigma)) \ar[r] & \mathbf{P}(E \oplus F)}
\end{displaymath}

The projectivized cotangent bundle $\mathbf{P}(E)$ is irreducible and its dimension is $2n-1$. The quasi-symmetric blow-up $\mathbf{P}(C)$ of $D$ along $D_{sing}$ has pure dimension $n-1$, since we have seen in the proof of proposition \ref{quasi} the quasi-symmetric blow-up is a Cartier divisor in $\textup{Bl}_{D_{sing}}X$ with a local equation $h/g$. Thus the codimension of $\mathbf{P}(C)$ in $\mathbf{P}(E)$ equals $n$. The ideal of $\mathbf{P}(C)$ in $\mathbf{P}(E)$ is also locally generated by $n$ elements, because the rank of $\textup{Der}_X(-\log D)$ is $n$. From these we deduce that $\mathbf{P}(C)$ is a locally complete intersection scheme in $\mathbf{P}(E)$---a well known result for Cohen-Macaulay rings (\cite{MR1011461} theorem 17.4). 

Denote by $\tilde{q}$ the projection $\Gamma(\sigma) \to X$ and $q$ the projection $\mathbf{P}(\Gamma(\sigma)) \to X$. The locally complete intersection $\mathbf{P}(C)$ has a normal bundle of rank $n$ in $\mathbf{P}(E)$, and this normal bundle is a subbundle of $\phi^*N$ where $N$ is the normal bundle to $\mathbf{P}(\Gamma(\sigma))$ in $\mathbf{P}(E \oplus F)$ (\cite{MR732620} chapter 6, page 93). We know that $N$ also has rank $n$. As a result, the normal bundle to $\mathbf{P}(C)$ in $\mathbf{P}(E)$ must agree with $\phi^*N$. 

The closed imbedding $\mathbf{P}(\Gamma(\sigma)) \to \mathbf{P}(E \oplus F)$ falls along the standard situation that one projectivized vector bundle being imbedded into another. The normal bundle to $\Gamma(\sigma)$ in $E \oplus F$ is $\tilde{q}^*F$ (\cite{MR732620} appendix B.7.3), and thus $N \cong q^*F \otimes \mathscr{O}(1)$. The last statement can be seen by looking at the Euler sequences defining the tangent bundles of $\mathbf{P}(\Gamma(\sigma))$ and $\mathbf{P}(E \oplus F)$. 

Finally we observe that 
\begin{equation*}
\phi^*(q^*F \otimes \mathscr{O}(1)) = p^*F \otimes \mathscr{O}(1)
\end{equation*}
\end{proof}

\begin{remark}
Similarly, the normal bundle to $\mathbf{P}(E)$ in $\mathbf{P}(E \oplus F)$ is $r^*F \otimes \mathscr{O}(1)$, with $r$ the projection $\mathbf{P}(E) \to X$.
\end{remark}

\begin{corol}\label{imbed}
In $A_*(\mathbf{P}(E))$, we have $[\mathbf{P}(C)] = c_n(r^*F \otimes \mathscr{O}(1)) \cap [\mathbf{P}(E)]$.
\end{corol}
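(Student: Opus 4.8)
The plan is to realize $P(C)$ as the zero scheme of a \emph{regular} section of the rank-$n$ vector bundle $r^*F \otimes \mathscr{O}(1)$ on $P(E)$, and then to invoke the standard principle that the fundamental class of the zero scheme of a regular section of a vector bundle equals the top Chern class of that bundle cut against the ambient class (the localized top Chern class, \cite{MR732620} Chapter 14). The consistency hint that this is the right framework is that, by Proposition \ref{normal}, the restriction $(r^*F \otimes \mathscr{O}(1))|_{P(C)} = p^*F \otimes \mathscr{O}(1)$ is exactly the normal bundle of $P(C)$ in $P(E)$, which is precisely the relation one expects between a regular section's bundle and the normal bundle of its zero scheme.

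First I would construct the section. On $P(E)$ there is the tautological inclusion $\mathscr{O}_{P(E)}(-1) \hookrightarrow r^*E$; composing it with the pullback $r^*\sigma: r^*E \to r^*F$ of the bundle map $\sigma$ yields a morphism $\mathscr{O}_{P(E)}(-1) \to r^*F$, that is, a global section $s$ of $r^*F \otimes \mathscr{O}(1)$. Over a point $[\ell] \in P(E)$ lying above $x \in X$, the section $s$ sends a generator $v$ of the line $\ell \subset E_x$ to $\sigma(v) \in F_x$, so $s$ vanishes exactly where $v \in \ker(\sigma_x) = C_x$. Set-theoretically this vanishing locus is $P(C)$.

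The step requiring care, which I expect to be the main technical point, is upgrading this to the \emph{scheme-theoretic} equality $Z(s) = P(C)$. For this I would argue locally: choosing a local frame $\theta_1, \dots, \theta_n$ of $\textup{Der}_X(-\log D)$ and its dual frame to trivialize $r^*F$, the components $s_1, \dots, s_n$ of $s$ are precisely the images of the $\theta_i$ under $\textup{Der}_X(-\log D) \hookrightarrow \textup{Der}_X = \textup{Sym}^1(\textup{Der}_X)$, regarded as sections of $\mathscr{O}_{P(E)}(1)$. By the proposition identifying the inverse image of the zero section of $F$ under $\sigma$ with $C$, these same elements generate the ideal of $C$ in $E$, hence (after projectivizing) the ideal of $P(C)$ in $P(E)$. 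Therefore the zero ideal of $s$ and the ideal of $P(C)$ coincide, giving $Z(s) = P(C)$ as schemes. The only delicacy here is keeping the duality conventions straight when matching $s_i$ with the image of $\theta_i$ in $\textup{Sym}^1(\textup{Der}_X)$.

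Finally, Proposition \ref{normal} guarantees that $P(C)$ has codimension $n$ in $P(E)$, equal to the rank of $r^*F \otimes \mathscr{O}(1)$, and that $P(C)$ is a local complete intersection there. Consequently $s$ is a regular section with no excess, so its localized top Chern class is just the fundamental cycle $[P(C)]$. Applying the refinement formula $\iota_* \mathbb{Z}(s) = c_n(r^*F \otimes \mathscr{O}(1)) \cap [P(E)]$ then yields $[P(C)] = c_n(r^*F \otimes \mathscr{O}(1)) \cap [P(E)]$ in $A_*(P(E))$, as claimed. An alternative route would run the same computation through the Cartesian square of Proposition \ref{normal} using the refined Gysin homomorphism for the regular embedding $P(E) \hookrightarrow P(E\oplus F)$ together with the normal bundle $r^*F\otimes\mathscr{O}(1)$ from the Remark, but the explicit section makes both the cycle identity and the scheme structure most transparent.
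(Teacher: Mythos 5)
Your proof is correct, but it takes a genuinely different route from the paper. The paper works inside $P(E\oplus F)$: it deforms the graph $P(\Gamma(t\sigma))$ to $P(E)$ as $t\to 0$, notes that the intersection product with $[P(E)]$ equals $[P(C)]$ for all $t\neq 0$, and then concludes by the dynamic interpretation of intersection products together with the self-intersection formula, using the normal bundle $r^*F\otimes\mathscr{O}(1)$ computed in the remark after Proposition \ref{normal}. You instead build the explicit section $s$ of $r^*F\otimes\mathscr{O}(1)$ from $\mathscr{O}_{P(E)}(-1)\hookrightarrow r^*E\xrightarrow{r^*\sigma} r^*F$, identify $Z(s)=P(C)$ scheme-theoretically via the degree-one generation statement of Theorem \ref{sym} (through the proposition on the inverse image of the zero section), and invoke the localized top Chern class of a regular section. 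Your route buys tighter scheme-theoretic control --- the equality $Z(s)=P(C)$ is pinned down by the same algebra that defines $P(C)$, and the regularity of $s$ follows from the codimension count plus the fact that $P(E)$ is Cohen--Macaulay --- and it sidesteps the conservation-of-number limit step, which in the paper's version silently relies on the multiplicity-one intersection coming from the lci structure. The paper's deformation argument, on the other hand, recycles the Cartesian square already set up for Proposition \ref{normal} and makes the geometric origin of the class $c_n(r^*F\otimes\mathscr{O}(1))$ (as a self-intersection) more visible. Both are complete; your closing remark about the refined Gysin map through the Cartesian square is in fact essentially the paper's argument.
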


\begin{proof}
For any $t$ in the algebraically closed base field, there is a map $t\sigma: E \to F$ defined by $v \mapsto t \cdot \sigma(v)$. Thus there is a family of cycles in $\mathbf{P}(E \oplus F)$ deforming $\mathbf{P}(\Gamma(\sigma))$ $(t=1$) to $\mathbf{P}(E)$ $(t=0)$. Moreover, the intersection product of $[\mathbf{P}(\Gamma(t\sigma))]$ and $[\mathbf{P}(E)]$ is always $[\mathbf{P}(C)]$ for $t \neq 0$. Therefore:
\begin{equation*}
\begin{split}
[\mathbf{P}(C)] &= [\mathbf{P}(\Gamma(t\sigma))] \cdot [\mathbf{P}(E)] \quad (t \neq 0) \\
           &= \lim_{t \to 0} \ \big([\mathbf{P}(\Gamma(t\sigma))] \cdot [\mathbf{P}(E)]\big) \\
           &= [\mathbf{P}(E)] \cdot [\mathbf{P}(E)] \\
           &= c_n(r^*F \otimes \mathscr{O}(1)) \cap [\mathbf{P}(E)]
\end{split}
\end{equation*}

For the third equality we use the dynamic interpretation of the intersection product, and for the last equality we use the self intersection formula (\cite{MR732620} theorem 6.2).

\end{proof}

To summarize, in this section, we have realized the quasi-symmetric blow-up $\mathbf{P}(C)$ of $D$ along $D_{sing}$ concretely in the projectivized cotangent bundle $\mathbf{P}(E)$ of $X$ (proposition \ref{normal} and corollary \ref{imbed}). Corollary \ref{imbed} will be used to calculate the shadow of $\mathbf{P}(C)$ in $A_*(X)$, which is an essential step in obtaining the main theorem of this paper.

\section{Proof of the main theorem}

Engaging all elements we saw in the previous sections, the proof of Theorem \ref{main} is almost at hand.

\begin{proof}[Proof of theorem \ref{main}]
We rewrite the formula we want to prove as:
\begin{equation*}
c_{\textup{SM}}(\mathbbm{1}_X) - c(\textup{Der}_X(-\log D))\cap [X] = c_{\textup{SM}}(\mathbbm{1}_D)
\end{equation*}

Taking the dual of this formula and recalling that for a nonsingular variety $X$, $c_{\textup{SM}}(\mathbbm{1}_X) = c(TX) \cap [X]$, it is equivalent to verify that:
\begin{equation*}
(-1)^{n}c(T^*X)\cap [X] - (-1)^{n}c(\Omega^{1}_X(\log D))\cap [X] = \breve{c}_{\textup{SM}}(\mathbbm{1}_D).
\end{equation*}

If we invoke our notations in the previous section, we can again rewrite the formula as:
\begin{equation*}
c(E) \cap [X] - c(F) \cap [X] = (-1)^n \breve{c}_{\textup{SM}}(\mathbbm{1}_D).
\end{equation*}

By theorem \ref{shadow}, proposition \ref{quasi} and our discussion about the natural transformation $\mathcal{L} \leadsto  \mathcal{A}$, we get:
\begin{equation*}
\begin{split}
(-1)^{n-1}\breve{c}_{\textup{SM}}(\mathbbm{1}_D) &= r_{*}(c(\zeta) \cap [\mathbf{P}(C)]) \\
                                                                                           &= c(E) \cap r_{*}\Big(\big(c(\mathscr{O}_{\mathbf{P}(E)}(-1))\big)^{-1} \cap[\mathbf{P}(C)]\Big).
\end{split}
\end{equation*}

Here $r$ is the projection $\mathbf{P}(E) \to X$, and $\zeta$ is the universal quotient bundle of $\mathbf{P}(E)$. In getting the second equality, we use the projection formula and the Whitney sum formula:
\begin{equation*}
c(r^{*}E) = c(\zeta) \cdot c(\mathscr{O}_{\mathbf{P}(E)}(-1)).
\end{equation*}

Thus we are only to verify:
\begin{equation*}
c(E) \cap [X] - c(F) \cap [X] = -c(E) \cap r_{*}\Big(\big(c(\mathscr{O}_{\mathbf{P}(E)}(-1))\big)^{-1} \cap[\mathbf{P}(C)]\Big).
\end{equation*}

Recall that the Segre class $s(E)$ of the vector bundle $E$ is the multiplicative inverse of the total Chern class $c(E)$ of $E$ (\cite{MR732620} chapter 3). We multiply both sides of the above formula by $s(E)$ and reach another equivalent form of formula \eqref{formula}:
\begin{equation*}
[X] - s(E) \cdot c(F) \cap [X] = -r_{*}\Big(\big(c(\mathscr{O}_{\mathbf{P}(E)}(-1))\big)^{-1} \cap[\mathbf{P}(C)]\Big).
\end{equation*}

Let us verify this last formula. Denote by $H$ a general hyperplane in $\mathbf{P}(E)$, or equivalently $c_1(\mathscr{O}_{\mathbf{P}(E)}(1))$. We have:
\begin{equation*}
\begin{split}
-r_*\Big(\big(c(\mathscr{O}_{\mathbf{P}(E)}(-1))\big)^{-1} \cap[\mathbf{P}(C)]\Big) &= -r_*\big(\sum_{i \geq 0}H^i \cap [\mathbf{P}(C)]\big) \\
                                                                                                                 &= -r_*\big(\sum_{i \geq 0}H^i \cdot c_n(r^*F \otimes \mathscr{O}(1)) \cap [\mathbf{P}(E)]\big) \\
                                                                                                                 &= -r_*\big(\sum_{i \geq 0}H^i \cdot \sum^{n}_{j=0}(c_{j}(r^*F) \cdot H^{n-j}) \cap [\mathbf{P}(E)] \big) \\
                                                                                                                 &= -\sum_{i \geq 0}\sum^{n}_{j=0}c_{j}(F) \cdot r_*(H^{n+i-j} \cap [\mathbf{P}(E)]) \\
                                                                                                                 &= -\sum_{i \geq 0}\sum^{n}_{j=0}c_{j}(F) \cdot s_{i-j+1}(E) \cap[X] \\
                                                                                                                 &= -\sum_{i \geq 0}\sum_{\substack{j+k=i+1 \\ j,k \geq 0}}c_j(F) \cdot s_k(E) \cap [X] \\
                                                                                                                 &= -\big(c(F) \cdot s(E) - 1\big) \cap [X] \\
                                                                                                                 &= [X] - c(F) \cdot s(E) \cap [X]
\end{split}
\end{equation*}

The second among these equalities uses Corollary \ref{imbed}. The fourth one uses the projection formula. The fifth one uses the definition of the Segre classes. The sixth one employs the fact that $s_k(E) = 0$ when $k<0$ for any vector bundle $E$. For the seventh one, recall that $c_0(F) = s_0(E) = 1$.

\end{proof}

Let us once again review the key ideas in the proof. We first take the dual of the proposed formula \eqref{formula} to make it more adaptable to the conclusion of theorem \ref{shadow}. Then the original formula is turned into a formula about the shadow of the quasi-symmetric blow-up $\mathbf{P}(C)$. The fact that the normal bundle to $\mathbf{P}(E)$ in $\mathbf{P}(E \oplus F)$ (and thus the normal bundle to $\mathbf{P}(C)$ in $\mathbf{P}(E)$) is related to the pull back of the logarithmic cotangent bundle $F$ is the most important observation in this paper. This observation finally allows one to express the shadow of $\mathbf{P}(C)$ by the Chern class of $E$ and the Segre class of $F$.

\bibliographystyle{alpha}
\bibliography{liaobib}

\end{document}